\newtheorem{theorem}{Theorem}[section]
\newtheorem{lemma}[theorem]{Lemma}
\newtheorem{cor}[theorem]{Corollary}
\newtheorem*{Theorem1'}{Theorem 1'}
\theoremstyle{definition}
\theoremstyle{remark}
\numberwithin{equation}{section}
\newcommand  \s{\sigma}
\newcommand \Z{{\mathbb Z}}
\newcommand \al{{\alpha}}
\newcommand \be{{\beta}}
\newcommand \si{{\sigma}}
\begin{document}

\title[On the splitting ring of a polynomial] {\small On the splitting ring of a polynomial}

\author{Fernando Szechtman}
\address{Department of Mathematics and Statistics, University of Regina, Canada}
\email{fernando.szechtman@gmail.com}
\thanks{The author was supported in part by an NSERC discovery grant}

%    General info
\subjclass[2010]{13B25, 16Z05}

%\date{January 1, 2001 and, in revised form, June 22, 2001.}

%\dedicatory{This paper is dedicated to our advisors.}

\keywords{splitting ring; regular representation}

\begin{abstract} Let $f(Z)=Z^n-a_{1}Z^{n-1}+\cdots+(-1)^{n-1}a_{n-1}Z+(-1)^na_n$ be a monic polynomial with coefficients
in a ring~$R$ with identity, not necessarily commutative.
We study the ideal $I_f$ of $R[X_1,\dots,X_n]$ generated by
$\s_i(X_1,\dots,X_n)-a_{i}$, where $\s_1,\dots,\s_n$ are the
elementary symmetric polynomials, as well as the quotient ring
$R[X_1,\dots,X_n]/I_f$.
\end{abstract}

\maketitle

\section{Introduction}

%It will be convenient to state our results completely deprived of any preamble.

Let $F$ be a field and let $f(Z)\in F[Z]$ be a polynomial of degree
$n\geq 1$ having distinct roots $r_1,\dots, r_n$ in a splitting
field $K$. Let $F[X_1,\dots,X_n]\to K$ be the epimorphism of
$F$-algebras $p(X_1,\dots,X_n)\to p(r_1,\dots,r_n)$ and let $J_f$ be
its kernel. The Galois group $\mathrm{Gal}(K/F)$ can be identified with
the subgroup of $S_n$ that preserves all algebraic relations
amongst the roots of $f(Z)$, i.e., the subgroup of $S_n$ that
preserves $J_f$.

Let $\s_1,\dots,\s_n\in F[X_1,\dots,X_n]$ be the elementary symmetric polynomials. It is clear that
$$
I_f=(\s_1(X_1,\dots,X_n)-\s_1(r_1,\dots,r_n),\dots,\s_n(X_1,\dots,X_n)-\s_n(r_1,\dots,r_n))
$$
is included in $J_f$, and one verifies that $I_f=J_f$ if and only if $[K:F]=n!$.

Regardless of whether $I_f=J_f$  or not, the quotient algebra
$F[X_1,\dots,X_n]/I_f$ possesses generic features valid in great
generality, and as such has been a classical object of
investigation when $F$ is replaced by a commutative ring with identity.

Let $R$ be a non-zero ring with identity. Given a monic polynomial
$$f(Z)=Z^n-a_{1}Z^{n-1}+a_2
Z^{n-2}+\cdots+(-1)^{n-1}a_{n-1}Z+(-1)^na_n
$$
of degree $n\geq 1$ in $R[Z]$, consider the ideal $I_f$ of $R[X_1,\dots,X_n]$ given~by
$$
I_f=(\s_1(X_1,\dots,X_n)-a_{1},\dots,\s_n(X_1,\dots,X_n)-a_n),
$$
where $\s_1,\dots,\s_n\in
R[X_1,\dots,X_n]$ are the elementary symmetric polynomials, as well as the quotient ring
$$R_f=R[X_1,\dots,X_n]/I_f.$$
We refer to $R_f$ as the universal splitting ring for $f$ over $R$.

Assume until further notice that $R$ is commutative. As far as we
know, the first systematic study of $R_f$ was made by Nagahara
\cite{N}, who showed the following. The ring $R_f$ is a free
$R$-module of rank $n!$ with basis $r_1^{\al_1}\cdots
r_n^{\al_n}$, where $r_i=X_i+I_f$ and $0\leq \al_i\leq n-i$ for
every $1\leq i\leq n$; the composite map $R\to R[X_1,\dots,R_n]\to R_f$ is injective;
$f(Z)=(Z-r_1)\cdots (Z-r_1)$ holds in $R_f[Z]$; the symmetric group $S_n$ acts via automorphisms on $R_f$
with ring of invariants equal to $R$, provided the discriminant
$\delta(f)$ of $f$ is a unit in $R$.

Independently, and shortly afterwards, Barnard \cite{Ba} proved
essentially the same results, although his statement concerning
the ring of invariants was inaccurate.

A few years later Wang \cite{W} established an isomorphism, under
the assumption that $\delta(f)$ be a unit, between $R_f$ and a
ring that Auslander and Goldman \cite{AG} had previously
constructed in a completely different way.

Shortly afterwards $R_f$ matured into book form, described first
by Bourbaki \cite{Bo} and later by Pohst and Zassenhaus \cite{PZ}.

Lately, $R_f$ has attracted considerable attention
following a paper by Ekedahl and Laksov \cite{EL}, who investigate
$R_f$ when $f$ is a generic polynomial (with coefficients
algebraically independent over $R$), make an independent study of
the ring of invariants of $R_f$ under $S_n$, and give applications
of $R_f$ to Galois theory.

More recently, the concept of splitting ring of a polynomial has been generalized to the notion of Galois closure for ring extensions
by Bhargawa and Matthew \cite{BS} as well as Gioia \cite{G}.

We henceforth remove the requirement that $R$ be commutative. Our goal is to study the left regular representation $\ell:R_f\to\mathrm{End}_R(R_f)$,
with the aim of producing linear and matrix realizations of $R_f$, which is viewed here as a right $R$-module.

In order to understand the $R$-linear maps $\ell_{r_i}$, where $$r_i=X_i+I_f\in R_f,$$ a knowledge of the relations amongst $r_1,\dots,r_n$
is required. The defining generators of $I_f$, namely $\si_i-a_i$, are not well suited for this purpose. We consider, instead, the polynomials $$f_1(X_1)\in R[X_1],f_2(X_1,X_2)\in R[X_1,X_2],\dots,f_n(X_1,\dots,X_n)\in R[X_1,\dots,X_n],$$ recursively defined by
\begin{equation}\label{defensor}
f_1(X_1)=f(X_1)
\end{equation}
and
\begin{equation}\label{defensor2}
f_2(X_1,X_2)=\frac{f_1(X_2)-f_1(X_1)}{X_2-X_1},\; f_3(X_1,X_2,X_3)=\frac{f_2(X_1,X_3)-f_2(X_1,X_2)}{X_3-X_2},\dots,
\end{equation}
that is,
\begin{equation}\label{defensor3}
f_{i+1}(X_1,\dots,X_i,X_{i+1})=\frac{f_{i}(X_1,\dots,X_{i-1},X_{i+1})-f_{i}(X_1,\dots,X_{i-1},X_{i})}{X_{i+1}-X_i},
\end{equation}
the quotient of dividing $f_{i}(X_1,\dots,X_{i-1},X_{i+1})-f_{i}(X_1,\dots,X_{i-1},X_{i})$ by~$X_{i+1}-X_i$.

The polynomials $f_1,\dots,f_n$ play a decisive role in the study of $R_f$ and are shown to generate $I_f$.
Moreover, closed formulae are given for $f_1,\dots,f_n$ and
their relationship to $\si_1-a_1,\dots,\si_n-a_n$. Furthermore, $f_1,f_2,\dots,f_n$
are shown to be symmetric in $R[X_1], R[X_1,X_2],\dots, R[X_1,\dots,X_n]$.

Now, it is no longer true that the composite map $\Gamma:R\to R[X_1,\dots,R_n]\to R_f$
is injective. In fact, it is entirely possible for $R_f$ to be zero. This will certainly be the case if $R$ is simple and at
least one of the coefficients of $f$ is not central. In any case, let $L_f$ be the ideal of $R$ generated by all commutators $[x,a_i]=xa_i-a_ix$, where $x\in R$ and $1\leq i\leq n$,
and let $M_f=\ker(\Gamma)$, that is, $M_f=I_f\cap R$. It is clear that $L_f\subseteq M_f$, and we show that equality prevails. Set $T_f=R/L_f$ and let $\pi:R\to T_f$ be the canonical projection, which we extend to a ring epimorphism $R[Z]\to T_f[Z]$, also denoted by~$\pi$. Note that
$R_f$ is naturally a $T_f$-module.

We readily verify that the universal splitting ring for $f$ over $R$ is isomorphic, as ring and $T_f$-module,
to the universal splitting ring for $\pi(f)$ over $T_f$. Note that the coefficients of $\pi(f)$ are central in $T_f$.
Thus, when studying $R_f$, there is no loss of generality in assuming that the coefficients of $f$ are already central in $R$.
This assumption will be kept under further notice. In this context, $\Gamma$ is shown to be injective and, moreover, $R_f$ is seen to be a free $R$-module with basis $r_1^{\al_1}\cdots r_n^{\al_n}$, where $0\leq \al_i\leq n-i$.

We next realize $R_f$ as a ring, say $S_f$, of $R$-linear operators acting on free right $R$-module.
Our construction of $S_f$ is completely independent of $R_f$ and is based solely on the polynomials $f_1,\dots,f_n$.

We also provide a matrix realization of $R_f$. More precisely, we construct matrices $A_1,\dots,A_n\in M_{n!}(R)$ satisfying
the following properties: $A_1,\dots,A_n$ commute with each other and with every element of $R$; $\si_i(A_1,\dots,A_n)=a_i$
for all $1\leq i\leq n$; $A_1^{\alpha_1}\cdots,A_n^{\alpha_n}$, $0\leq \alpha_i\leq n-i$, are $R$-linearly independent. It follows that
the subring $R[A_1,\dots,A_n]$ of $M_{n!}(R)$ is a universal splitting ring for $f$ and $f(Z)=(Z-A_1)\cdots (Z-A_n)$ is
a universal factorization of $f$. In the special case when $R=F$ is a field and $f$ is an irreducible and separable polynomial in $F[Z]$ with Galois group $S_n$, then $F[A_1,\dots,A_n]$
is a matrix realization of the splitting field of $f$ over $F$.

Our construction of $A_1,\dots,A_n$ is recursive in nature. It turns out that all non-zero entries of $A_1,\dots,A_n$
are equal, up to a sign, to the coefficients of $f$. This is entirely analogous to what happens to the companion matrix $C_f\in M_n(R)$
of $f$, a single universal root of $f$, although the simultaneous requirements for $A_1,\dots,A_n$ are substantially harder
to meet. We demonstrate the use of our recursive procedure by explicitly displaying $A_1,\dots,A_n$ for small values of $n$.

A key ingredient in the construction of $A_1,\dots,A_n$ is the following property of~$C_f$. If $B\in R[C_f]$ then
\begin{equation}\label{fh}
B=([B]\; C_f[B]\dots\; C_f^{n-1}[B]),
\end{equation}
where $[B]$ is the column vector of $R^n$ formed by the coordinates of $B$ relative to the $R$-basis $1,C_f,\dots,C_f^{n-1}$ of $R[C_f]$.
Property (\ref{fh}) was used in \cite{GS} to give a \emph{closed} formula for the product of any two elements belonging to simple
integral extension of $R$. This product arises often in field theory, when adjoining a single root to an
irreducible polynomial, and one is then forced to resort to the division algorithm for its computation.
In contrast, \cite{GS} furnishes the first \emph{closed} formula for this frequently encountered product.

Property (\ref{fh}) was also used in \cite{GS2} to study a wide range of features possessed by the subalgebra $A$ of $M_n(S)$, $S$ a commutative ring with $1\neq 0$, generated by two companion matrices to polynomials $g$ and $h$ of degree $n$ over $S$. For instance, if $S=\Z$ it is shown in
\cite{GS2} that $A$ is free of rank $n^2$ if and only if the resultant $R(g,h)\neq 0$, in which case the finite index
$$
[M_n(\Z):A]=|R(g,h)^{n-1}|.
$$

\section{A new set of generators for $I_f$}

We keep the above notation and assume until further notice that $R$ is an arbitrary non-zero ring with identity.

Corresponding to any transposition $(i,j)\in S_n$ there is an
$R$-linear operator $\Delta_{i,j}:R[X_1,\dots,X_n]\to
R[X_1,\dots,X_n]$ given by
$$
(\Delta_{(i,j)}
g)(X_1,\dots,,X_n)=\frac{g^{(i,j)}(X_1,\dots,X_n)-g(X_1,\dots,X_n)}{X_{j}-X_i}.
$$
%namely the quotient of dividing
%$g^{(i,j)}(X_1,\dots,X_n)-g(X_1,\dots,X_n)$ by $X_{j}-X_i$.

Observe that with this notation, we have
$$
f_1(X)=f(X_1),\, f_2=\Delta_{(1,2)} f_1,\, f_3=\Delta_{(2,3)}
f_2,\dots, f_n=\Delta_{(n-1,n)} f_{n-1}.
$$
We set
$$
I'_f=(f_1,\dots,f_n)
$$
and let $S_j^i(X_1,\dots,X_j)\in R[X_1,\dots,X_j]$ be the
sum of all monomials $X_1^{\alpha_1}\cdots X_j^{\alpha_j}$ such
that $\alpha_1+\cdots+\alpha_j=i$.

For $h_1,\dots,h_m\in R[X_1,\dots,X_n]$, the left and right ideals of $R[X_1,\dots,X_n]$ generated by $h_1,\dots,h_m$
will respectively be denoted by $l(h_1,\dots,h_m)$ and $r(h_1,\dots,h_m)$.

Furthermore, we let $g_i=\s_i-a_i$ for $1\leq i\leq n$. Note that
$$
l(g_1,\dots,g_n)+L_f[X_1,\dots,X_n]=I_f=r(g_1,\dots,g_n)+L_f[X_1,\dots,X_n].
$$

\begin{theorem}\label{e2} We have
$$
l(g_1,\dots,g_n)=l(f_1,\dots,f_n),\; r(g_1,\dots,g_n)=r(f_1,\dots,f_n)\text{ and }I_f=I'_f.
$$
Moreover,
\begin{equation}
\label{magia}
f_i=S_i^{n-(i-1)}-a_1 S_i^{n-i}+a_2 S_i^{n-(i+1)}+\cdots+(-1)^{n-(i-1)}a_{n-(i-1)},\quad 1\leq i\leq n.
\end{equation}
In particular, each $f_i$ is symmetric in $R[X_1,\dots,X_i]$ of degree $n-(i-1)$.

Furthermore, the following identity is valid for all $1\leq i\leq n$:
\begin{equation}
\label{magia2}
f_i=(\s_1-a_1)S_i^{n-i}+(-1)(\s_2-a_2)S_i^{n-(i+1)}+\cdots+(-1)^{n-i}(\s_{n-(i-1)}-a_{n-(i-1)}).
\end{equation}

\end{theorem}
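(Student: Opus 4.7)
My plan is to prove (\ref{magia}) first by induction on $i$, derive (\ref{magia2}) from (\ref{magia}) by means of a classical generating-function identity, and then deduce the three ideal equalities directly from these two explicit formulas. Once (\ref{magia}) is in place, the symmetry of $f_i$ in $X_1,\dots,X_i$ and its degree $n-i+1$ are immediate from the fact that each $S_i^k$ is symmetric in $X_1,\dots,X_i$ and homogeneous of degree $k$.

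For (\ref{magia}), the base case $i=1$ is just the definition of $f_1(X_1)=f(X_1)$, since $S_1^m=X_1^m$. For the inductive step, I would apply $\Delta_{(i,i+1)}$ to the stated formula for $f_i$, using the divided-difference identity
$$\Delta_{(i,i+1)} S_i^k(X_1,\dots,X_i) = S_{i+1}^{k-1}(X_1,\dots,X_{i+1}), \qquad k\ge 1,$$
together with the fact that $\Delta_{(i,i+1)}$ kills the constant term $(-1)^{n-i+1}a_{n-i+1}$. The divided-difference identity itself can be verified directly by expanding $S_i^k(X_1,\dots,X_{i-1},Y) = \sum_{j\ge 0} Y^j\, S_{i-1}^{k-j}(X_1,\dots,X_{i-1})$, using $(Y^j-Z^j)/(Y-Z) = S_2^{j-1}(Y,Z)$, and reassembling the result as $S_{i+1}^{k-1}$ via the same decomposition.

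For (\ref{magia2}), I would expand the right-hand side, separate the $\s_j$ contributions from the $a_j$ contributions, and apply (\ref{magia}) to collapse the $a_j$ piece into $f_i - S_i^{n-i+1}$. What remains is the symmetric-function identity
$$\sum_{j=0}^{n-i+1}(-1)^j\, \s_j(X_1,\dots,X_n)\, S_i^{n-i+1-j}(X_1,\dots,X_i) = 0,$$
which is the coefficient of $t^{n-i+1}$ in
$$\Bigl(\sum_{j=0}^n (-1)^j\, \s_j(X_1,\dots,X_n)\, t^j\Bigr)\Bigl(\sum_{k\ge 0} S_i^k(X_1,\dots,X_i)\, t^k\Bigr) = \prod_{k=i+1}^n (1 - X_k t),$$
and vanishes because the right-hand side is a polynomial in $t$ of degree only $n-i$.

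For the ideal equalities, formula (\ref{magia2}) writes each $f_i$ as a combination of the $g_j$'s whose coefficients $\pm S_i^{n-i+1-j}$ have integer coefficients and are therefore central in $R[X_1,\dots,X_n]$; this places $f_i$ simultaneously in $l(g_1,\dots,g_n)$ and $r(g_1,\dots,g_n)$. Conversely, (\ref{magia2}) is triangular in the $g_j$'s: from $i=n$ we read $g_1=f_n$, and from $i=n-k$ we inductively obtain
$$g_{k+1} = (-1)^k\Bigl(f_{n-k} - \sum_{j=1}^{k}(-1)^{j-1}\, g_j\, S_{n-k}^{k+1-j}\Bigr),$$
expressing each $g_j$ as a $\Z[X_1,\dots,X_n]$-combination of $f_n,\dots,f_{n-j+1}$, so that $g_j\in l(f_1,\dots,f_n)\cap r(f_1,\dots,f_n)$. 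The two pairs of ideal equalities follow. Finally, the same commutator computation that already yields $I_f = l(g_1,\dots,g_n) + L_f[X_1,\dots,X_n]$ applies verbatim to give $I'_f = l(f_1,\dots,f_n) + L_f[X_1,\dots,X_n]$, and these are now equal, proving $I_f=I'_f$. The main technical obstacle is the generating-function identity used in the proof of (\ref{magia2}); the remainder is careful bookkeeping that exploits the centrality of the $S_i^k$'s to make the argument valid for arbitrary, possibly noncommutative, $R$.
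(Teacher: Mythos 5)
Your proposal is correct and follows essentially the same route as the paper: \eqref{magia} is obtained by iterating the divided-difference identity $\Delta_{(i,i+1)}S_i^k=S_{i+1}^{k-1}$, \eqref{magia2} by combining \eqref{magia} with the vanishing identity $\sum_{j\ge 0}(-1)^j\s_j S_i^{n-(i-1)-j}=0$ (the paper's \eqref{gendel}), and the ideal equalities from the centrality of the $S_i^k$ together with the triangular back-substitution starting at $f_n=g_1$. The only divergence is how that auxiliary identity is proved---the paper applies the same operators $\Delta_{(1,2)},\dots,\Delta_{(n-1,n)}$ to \eqref{gener}, whereas you use the generating-function identity $\bigl(\sum_j(-1)^j\s_j t^j\bigr)\bigl(\sum_k S_i^k t^k\bigr)=\prod_{k>i}(1-X_k t)$ and a degree count---and both arguments are valid over arbitrary $R$ since all polynomials involved have integer coefficients.
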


\begin{proof} We begin by observing that
\begin{equation}
\label{delt} \Delta_{(j,j+1)} S_j^i=S_{j+1}^{i-1}.
\end{equation}
It is clear that (\ref{magia}) holds when $i=1$. Beginning with
this case, successively applying
$\Delta_{(1,2)},\dots,\Delta_{(n-1,n)}$, and making use of
(\ref{delt}) yields (\ref{magia}) for all $1\leq i\leq n$.

As is well-known, the following identity holds in $R[X_1,\dots,X_n][Z]$:
$$
(Z-X_1)\cdots (Z-X_n)=Z^n-\sigma_1 Z^{n-1}+\sigma_2 Z^{n-2}+\cdots+(-1)^{n}\sigma_n,
$$
whence
\begin{equation}
\label{gener}
0=X_1^n-\sigma_1 X_1^{n-1}+\sigma_2 X_1^{n-2}+\cdots+(-1)^{n}\sigma_n.
\end{equation}
Successively applying $\Delta_{(1,2)},\dots,\Delta_{(n-1,n)}$
yields
\begin{equation}
\label{gendel}
0=S_i^{n-(i-1)}-\s_1 S_i^{n-i}+\s_2 S_i^{n-(i+1)}+\cdots+(-1)^{n-(i-1)}\s_{n-(i-1)},\quad 1\leq i\leq n.
\end{equation}
Subtracting (\ref{gendel}) from (\ref{magia}) we obtain (\ref{magia2}).
The latter not only gives the inclusions
$$
\ell(f_1,\dots,f_n)\subseteq \ell(g_1,\dots,g_n),\; r(f_1,\dots,f_n)\subseteq r(g_1,\dots,g_n)\text{ and }I_f\subseteq I'_f,
$$
but reading it backwards from $i=n$ down to $i=1$ yields the reverse inclusions.
\end{proof}

As an illustration of Theorem \ref{e2}, when $n=4$ we have
$$
f_1=(\s_1-a_1)X_1^3-(\s_2-a_2)X_1^2+(\s_3-a_3)X_1-(\s_4-a_4),
$$
$$
f_2=(\s_1-a_1)(X_1^2+X_2^2+X_1X_2)-(\s_2-a_2)(X_1+X_2)+(\s_3-a_3),
$$
$$
f_3=(\s_1-a_1)(X_1+X_2+X_3)-(\s_2-a_2),
$$
$$
f_4=\s_1-a_1,
$$
as well as its alternative version
$$
f_1=X_1^4-a_1X_1^3+a_2X_1^2-a_3X_1+a_4,
$$
$$
f_2=X_1^3+X_2^3+X_1X_2^2+X_2X_1^2-a_1(X_1^2+X_2^2+X_1X_2)+a_2(X_1+X_2)-a_3,
$$
$$
f_3=X_1^2+X_2^2+X_3^2+X_1X_2+X_2X_3+X_1X_3-a_1(X_1+X_2+X_3)+a_2,
$$
$$
f_4=X_1+X_2+X_3+X_4-a_1.
$$

\section{$R_f$ is a free module when $R$ is non-commutative}\label{se1}

Recalling the notation used in the Introduction, we have the following basic result.

%As a matter of notation, if $u:S_1\to S_2$ is a ring homomorphism, the corresponding ring homomorphism $S_1[Z]\to S_2[Z]$
%will often be denoted by $u$ as well.

\begin{lemma}\label{mismo} The universal splitting ring for $f$ over $R$ is isomorphic, as ring and $T_f$-module,
to the universal splitting ring for $\pi(f)$ over $T_f$.
\end{lemma}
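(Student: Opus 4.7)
The plan is to construct a ring isomorphism $\Phi : R_f \to (T_f)_{\pi(f)}$ together with its inverse $\Psi$, in each case by descending an obvious assignment on generators, and then to observe that $T_f$-linearity is automatic.

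The key input, already recorded in the paragraph preceding Theorem~\ref{e2}, is the inclusion $L_f[X_1,\dots,X_n] \subseteq I_f$. This holds because $[x,a_i] = g_i x - x g_i \in I_f$ for every $x \in R$, the variables $X_j$ (and hence each $\sigma_i$) being central in $R[X_1,\dots,X_n]$. As a consequence, the images of $a_1,\dots,a_n$ are central in $R_f$, and the canonical map $R \to R_f$ factors through $\pi$.

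With this in hand, I would define $\Phi$ as the descent to $R_f$ of the composite $R[X_1,\dots,X_n] \twoheadrightarrow T_f[X_1,\dots,X_n] \twoheadrightarrow (T_f)_{\pi(f)}$, which kills $L_f[X_1,\dots,X_n]$ by construction and sends each $g_i = \sigma_i - a_i$ to $\sigma_i - \pi(a_i) \in I_{\pi(f)}$. In the opposite direction, because $L_f \subseteq I_f$ and because the $r_i \in R_f$ commute with each other and with every element of $R$, the assignment $\pi(x) \mapsto x + I_f$, $X_i \mapsto r_i$ extends to a ring homomorphism $T_f[X_1,\dots,X_n] \to R_f$ that kills each generator $\sigma_i - \pi(a_i)$ of $I_{\pi(f)}$ (sending it to $g_i + I_f = 0$) and hence descends to $\Psi : (T_f)_{\pi(f)} \to R_f$.

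The compositions $\Phi\circ\Psi$ and $\Psi\circ\Phi$ agree with the identity on the generating sets of their respective domains, so they are mutually inverse; and each map is $T_f$-linear because it restricts to $\pi$ (respectively its section) on the coefficient rings. The only delicate point in the whole argument is verifying that the prospective $\Psi$ really is a ring homomorphism out of the non-commutative polynomial ring $T_f[X_1,\dots,X_n]$, which reduces to the two centrality facts recorded in the second paragraph; everything else is formal bookkeeping.
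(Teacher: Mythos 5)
Your proposal is correct and follows essentially the same route as the paper: both directions are obtained by descending the obvious maps $R[X_1,\dots,X_n]\twoheadrightarrow T_f[X_1,\dots,X_n]\twoheadrightarrow (T_f)_{\pi(f)}$ and $T_f[X_1,\dots,X_n]\to R_f$ modulo the respective ideals, and the two resulting epimorphisms are checked to be mutually inverse on generators, with $T_f$-linearity coming from compatibility with the structure maps. Your explicit verification of the centrality facts (that $[x,a_i]=g_ix-xg_i\in I_f$ and that the $r_i$ commute with each other and with $R$) only spells out details the paper leaves implicit.
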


\begin{proof} The projection $\pi:R\to T_f$ gives rise to the epimorphisms $R[Z]\to T_f[Z]$
and $R[X_1,\dots,X_n]\to T_f[X_1,\dots,X_n]$, also denoted by $\pi$. Set
$$
R'=T_f,\; f'=\pi(f)\in R'[Z]
$$
as well as
$$
I'_{f'}=\pi(I_f)\unlhd R'[X_1,\dots,X_n],\; R'_{f'}=R'[X_1,\dots,X_n]/I'_{f'}.
$$
The projection $R\to R'$ induces the epimorphism $$R[X_1,\dots,X_n]\to R'[X_1,\dots,X_n]\to R'_{f'}.$$ Since $I_f$ is in the kernel,
we obtain an epimorphism $R_f\to R'_{f'}$. On the other hand, $L_f$ is in the kernel of $R\to R[X_1,\dots,X_n]\to R_f$, yielding a homomorphism $R'\to R_f$, which can
be extended to an epimorphism $R'[X_1,\dots,X_n]\to R_f$ with $I'_{f'}$ in its kernel. This produces an epimorphism $R'_{f'}\to R_f$,
inverse of $R_f\to R'_{f'}$.
\end{proof}

\begin{theorem}\label{q1} The ideals $L_f$ and $M_f$ are equal.
Moreover, $R_f$ is a free $T_f$-module with basis $r_1^{\al_1}\cdots r_n^{\al_n}$, where $r_i=X_i+I_f$ and $0\leq \al_i\leq
n-i$ for all $1\leq i\leq n$.
\end{theorem}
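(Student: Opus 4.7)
The plan is to reduce to the case where $a_1,\dots,a_n$ are central in $R$ and then realize $R_f$ as an iterated extension of $R$ by monic polynomials. By Lemma~\ref{mismo}, $R_f$ is isomorphic as ring and as $T_f$-module to $R'_{f'}$, where $R'=T_f$ and the coefficients of $f'=\pi(f)$ are central in $R'$. If I can show that $R'_{f'}$ is a free $R'$-module with basis the monomials $r_1^{\alpha_1}\cdots r_n^{\alpha_n}$, $0\le\alpha_i\le n-i$, then both assertions follow: the freeness transfers to $R_f$ under the isomorphism, and the presence of $1$ among the basis elements forces the canonical map $T_f\to R_f$ to be injective, giving $M_f=\ker(\Gamma)=L_f$.

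Assume henceforth that $a_1,\dots,a_n$ are central in $R$. Set $R_0=R$ and, inductively,
\[
R_i=R_{i-1}[X_i]/(\bar{f_i}),
\]
where $\bar{f_i}\in R_{i-1}[X_i]$ is obtained from $f_i$ by substituting $r_j$ (the image of $X_j$ in $R_{i-1}$) for $X_j$ when $j<i$. Formula~(\ref{magia}) shows that $f_i$ is monic of degree $n-i+1$ in $X_i$, a property inherited by $\bar{f_i}$. I would then prove by induction on $i$ the joint statement: (i) the natural map $R[X_1,\dots,X_i]/(f_1,\dots,f_i)\to R_i$ is an isomorphism; (ii) $R_i$ is a free left $R$-module with basis $\{r_1^{\alpha_1}\cdots r_i^{\alpha_i}:0\le\alpha_j\le n-j\}$; (iii) $r_1,\dots,r_i$ commute with every element of $R_i$. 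Part (i) holds because $f_1,\dots,f_{i-1}$ do not involve $X_i$, allowing one to absorb these relations first and impose $\bar{f_i}$ afterwards. For (ii) and (iii), by (iii) at the previous stage together with centrality of the $a_k$, all coefficients of $\bar{f_i}$ lie in the center of $R_{i-1}$; since $\bar{f_i}$ is monic, Euclidean division shows that $R_i$ is free over $R_{i-1}$ of rank $n-i+1$ with basis $1,r_i,\dots,r_i^{n-i}$, which when combined with (ii) at stage $i-1$ yields (ii) at stage $i$; moreover, $r_i$ is central because $X_i$ commutes with $R_{i-1}[X_i]$ and the quotienting element $\bar{f_i}$ has central coefficients, while the earlier $r_j$ remain central through the injective embedding $R_{i-1}\hookrightarrow R_i$. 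At $i=n$, part (i) together with Theorem~\ref{e2} (to the effect that $I_f=(f_1,\dots,f_n)$) identifies $R_n$ with $R_f$ and produces the desired basis.

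The main obstacle is propagating centrality of $r_1,\dots,r_{i-1}$ through successive quotients: without it, $\bar{f_i}$ would fail to have central coefficients and the monic division step that underpins freeness would collapse. Reducing to central $a_k$'s via Lemma~\ref{mismo} is precisely the seed that keeps this propagation alive; everything else then flows from the explicit shape of $f_i$ given by formula~(\ref{magia}).
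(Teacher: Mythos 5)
Your argument is correct, but it is organized differently from the paper's. After the common first step (Lemma~\ref{mismo}, reducing to central coefficients), the paper does \emph{not} use Theorem~\ref{e2} at all: it invokes the classical Kronecker-style adjunction process (citing \cite{PZ}) to build an auxiliary ring $S\supseteq R$ with commuting roots $s_1,\dots,s_n$ and monomial basis, then gets $M_f=0$ and the $R$-linear independence of the $r_1^{\al_1}\cdots r_n^{\al_n}$ from the epimorphism $R_f\to S$, and proves spanning by a separate internal argument (each $r_i$ is annihilated by a monic polynomial of degree $n-i+1$ over $R[r_1,\dots,r_{i-1}]$). You instead build the tower $R_i=R_{i-1}[X_i]/(\bar f_i)$ \emph{inside} the presentation and identify $R_n$ with $R_f$ outright via $I_f=(f_1,\dots,f_n)$ from Theorem~\ref{e2} (which is proved earlier and independently, so there is no circularity), getting independence, spanning, and $M_f=L_f$ in one stroke; this makes the result self-contained rather than deferred to \cite{PZ}, and it delivers the paper's subsequent Corollary (the minimal polynomial of $r_i$ over $R[r_1,\dots,r_{i-1}]$ is $f_i$) as a byproduct, at the cost of making Theorem~\ref{q1} logically dependent on Theorem~\ref{e2}, a dependence the paper's proof avoids. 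Two small points to tighten: the centrality of the earlier $r_j$ in $R_i$ is not inherited ``through the injective embedding'' per se, but follows because $R_i$ is generated by $R_{i-1}$ together with the central element $r_i$, so an element central in $R_{i-1}$ that commutes with $r_i$ is central in $R_i$; and you should dispose of the degenerate case $L_f=R$ (so $T_f=0$ and $R_f=0$) in a sentence, as the paper does.
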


\begin{proof} If $L_f=R$ there is nothing to do, so we may suppose that $L_f$ is a proper ideal.

By Lemma \ref{mismo} we may replace $R$ by $T_f$ and assume
that the coefficients of $f$ are central in $R$. We need to show that $M_f=0$ and
$r_1^{\al_1}\cdots r_n^{\al_n}$, $0\leq \al_i\leq
n-i$, is an $R$-basis of~$R_f$. This is a well-known result when $R$ is commutative, and we proceed to indicate
how to derive it under the weaker hypothesis that $f$ has central coefficients in $R$.

Following a method that essentially goes back to Kronecker and proceeds by successive single root adjunctions (see \cite{PZ}
for details when $R$ is commutative), we may construct a ring $S$ containing $R$ as subring, with $1_R$ being
the identity of $S$, and elements $s_1,\dots,s_n$ of $S$ such that:

$\bullet$ $s_1,\dots,s_n$ commute with each other and with every element of $R$.

$\bullet$ $f(Z)=(Z-s_1)\cdots (Z-s_n)$ holds in $S[Z]$.

$\bullet$ $s_1^{\al_1}\cdots s_n^{\al_n}$, $0\leq \al_i\leq
n-i$, is an $R$-basis of $S$.

Let $\Omega:R[X_1,\dots,X_n]\to S$ be the ring epimorphism extending the inclusion $j:R\hookrightarrow S$ and
satisfying $X_i\to s_i$.  Then $I_f\subseteq\ker(\Omega)$, so $M_f\subseteq\ker (j)=(0)$.

Since $I_f\subseteq \ker(\Omega)$, we infer that $\Omega$ induces an epimorphism $\Psi:R_f\to S$ as rings and $R$-modules.
Thus $r_1^{\al_1}\cdots r_n^{\al_n}$, $0\leq \al_i\leq n-i$, are $R$-linearly independent, since so are their images under $\Psi$,
namely $s_1^{\al_1}\cdots s_n^{\al_n}$, $0\leq \al_i\leq n-i$.

On the other hand, we have
$$
f(Z)=(Z-r_1)\cdots (Z-r_n)\in R_f[Z]
$$
and
$$
R_f=R[r_1,\dots,r_n].
$$
Therefore $R_f$ is $R$-spanned by all $r_1^{\al_1}\cdots r_n^{\al_n}$, $0\leq
\al_i\leq n-i$. This is because each $r_i$ is annihilated by the monic polynomial
$(Z-r_i)\cdots (Z-r_n)\in R[r_1,\dots,r_{i-1}][Z]$ of degree $n-(i-1)$.
\end{proof}

%\begin{note}{\rm Theorem \ref{q1} and its proof remain valid when $L_f=R$ in which case
%care must be taken when interpreting our assertions. The case $L_f=R$ is certainly possible,
%for instance when $f$ has at least one non-central coefficient and $R$ is a simple ring.}
%\end{note}

In view of Lemma \ref{mismo} there is no loss of generality when studying $R_f$ in assuming
that all coefficients of $f$ are central in $R$, and we will make this assumption \emph{for the remainder of the paper}.

In light of Theorems \ref{e2} and \ref{q1} we have the following result.

\begin{cor} For any $1\leq i\leq n$, the minimal polynomial of $r_i$ over $R[r_1,\dots,r_{i-1}]$ is $f_i(r_1,\dots,r_{i-1},Z)\in R[r_1,\dots,r_{i-1}][Z]$, as described in (\ref{defensor})-(\ref{defensor3}) or (\ref{magia}).
\end{cor}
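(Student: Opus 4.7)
The plan is to verify the three defining properties for $p(Z) := f_i(r_1,\dots,r_{i-1},Z) \in R[r_1,\dots,r_{i-1}][Z]$ to be the minimal polynomial of $r_i$ over $R[r_1,\dots,r_{i-1}]$: that $p(Z)$ is monic, that $p(r_i)=0$, and that no nonzero element of $R[r_1,\dots,r_{i-1}][Z]$ of strictly smaller degree annihilates $r_i$.

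First I would read off monicity directly from formula (\ref{magia}). The leading summand is $S_i^{n-(i-1)}$, the sum of all monomials in $X_1,\dots,X_i$ of total degree $n-(i-1)$, which contains $X_i^{n-(i-1)}$ with coefficient~$1$. Every other summand $S_i^{n-i}, S_i^{n-(i+1)}, \dots$ has smaller total degree, so its $X_i$-degree is strictly less than $n-(i-1)$. Hence $f_i$, viewed as a polynomial in $X_i$ with coefficients in $R[X_1,\dots,X_{i-1}]$, is monic of degree $n-(i-1)$, and specialization $X_j\mapsto r_j$, $X_i\mapsto Z$ preserves this. That $p(r_i)=0$ is immediate from Theorem \ref{e2}, which shows $f_i \in I_f$, so its class in $R_f$ vanishes.

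The minimality would then be extracted from the $R$-basis of $R_f$ furnished by Theorem \ref{q1}. The subset $\mathcal{B}_{<i} = \{r_1^{\alpha_1}\cdots r_{i-1}^{\alpha_{i-1}} : 0 \leq \alpha_j \leq n-j\}$ is $R$-linearly independent (being part of that basis) and $R$-spans $R[r_1,\dots,r_{i-1}]$, since each $r_j$ for $j<i$ is annihilated by the monic polynomial $f_j(r_1,\dots,r_{j-1},Z)$ of degree $n-j+1$ established in the previous step, so higher powers of $r_j$ reduce. Now suppose $\sum_{k=0}^{n-i} c_k r_i^k = 0$ with $c_k \in R[r_1,\dots,r_{i-1}]$. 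Writing each $c_k$ uniquely in terms of $\mathcal{B}_{<i}$ and expanding yields an $R$-linear relation among elements $r_1^{\alpha_1}\cdots r_{i-1}^{\alpha_{i-1}}r_i^k$, all of which are distinct members of the $R$-basis of $R_f$; hence every coefficient vanishes and $c_0 = \cdots = c_{n-i} = 0$. Thus $1, r_i, \dots, r_i^{n-i}$ are $R[r_1,\dots,r_{i-1}]$-linearly independent, ruling out any nonzero polynomial of degree less than $n-(i-1)$ annihilating $r_i$, and $p(Z)$ is the minimal polynomial.

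The main care here is notational rather than conceptual: one must keep track of the standing centrality hypothesis (in force since the remark following Theorem \ref{q1}) to justify freely evaluating $f_i$ at the commuting arguments $r_1,\dots,r_{i-1}$ and matching the resulting monomials with the $R$-basis of $R_f$. Once that is in place, the result is a direct corollary of Theorems \ref{e2} and \ref{q1}, with no genuine obstacle.
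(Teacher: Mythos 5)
Your proof is correct and follows exactly the route the paper intends: the paper states the corollary as an immediate consequence of Theorems \ref{e2} and \ref{q1}, and your argument (monicity and degree from (\ref{magia}), vanishing at $r_i$ since $f_i\in I_f$, minimality from the free basis $r_1^{\alpha_1}\cdots r_n^{\alpha_n}$, $0\leq\alpha_i\leq n-i$) just spells out those details. No issues.
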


\section{$R_f$ viewed as ring of $R$-linear operators}

Here we use $f_1,\dots,f_n$ to define, from scratch, a ring of $R$-linear operators, which turns out to be isomorphic to $R_f$.
For this purpose, we view $R[X_1,\dots,X_n]$ as a right $R$-module, noting that $R$ acts on it
via $R$-endomorphisms by left multiplication. Let $V$ be the $R$-submodule of $R[X_1,\dots,X_n]$
spanned by all monomials $X_1^{\al_1}\cdots X_n^{\al_n}$, where $0\leq
\al_i\leq n-i$ for every $1\leq i\leq n$. Let $R\langle Y_1,\dots,Y_n\rangle$ be the ring
of polynomials in the non-commuting variables $Y_1,\dots,Y_n$ over $R$.
We inductively define $R$-linear endomorphisms
$L_{Y_1},\dots, L_{Y_n}$ of $V$ as follows. We first~let
$$
L_{Y_1}X_1^{\al_1}\cdots X_n^{\al_n}=X_1^{\al_1+1}\cdots
X_n^{\al_n},\quad \text{ if }\al_1<n-1.
$$
Noting that $X_1^n-f_1(X_1)=a_{1}X_1^{n-1}-a_2 X_1^{n-2}+\cdots+(-1)^{n-1}a_{n}$, we next define
$L_{Y_1}X_1^{n-1}X_2^{\al_2}\cdots X_n^{\al_n}$ to be equal to
$$
a_1X_1^{n-1}X_2^{\al_2}\cdots X_n^{\al_n}-
a_2X_1^{n-2}X_2^{\al_2}\cdots X_n^{\al_n}+\cdots+(-1)^{n-1}a_nX_2^{\al_2}\cdots X_n^{\al_n}.
$$
Suppose we have defined $L_{Y_1},\dots, L_{Y_{i-1}}\in \mathrm{End}_R(V)$ for some $1<i\leq n$.
This gives rise to a unique ring homomorphism $L^{i-1}:R\langle Y_1,\dots,Y_{i-1}\rangle\to \mathrm{End}_R(V)$,
that extends the action of $R$ on $V$ and satisfies $Y_j\mapsto L_{Y_j}$ for all $1\leq j\leq i-1$.
Now, it follows from (\ref{magia}) that
$$
X_i^{n-(i-1)}-f_i(X_1,\dots,X_i)=h_{n-i}(X_1,\dots,X_{i-1})X_i^{n-i}+\cdots+h_0(X_1,\dots,X_{i-1})
$$
for unique $h_{n-i},\dots,h_0\in R[X_1,\dots,X_{i-1}]$, and we let
$$
L_{Y_{i}}X_1^{\al_1}\cdots X_i^{\al_i}\cdots X_n^{\al_n}=X_1^{\al_1}\cdots X_{i}^{\al_i+1}\cdots X_n^{\al_n},\quad
\text{ if }\al_i<n-i,
$$
while $L_{Y_{i}}X_1^{\al_1}\cdots X_i^{n-i}\cdots X_n^{\al_n}$ is defined to be
$$
L^{i-1}_{h_{n-i}(Y_1,\dots,Y_{i-1})}X_1^{\al_1}\cdots X_i^{n-i}\cdots X_n^{\al_n}+\cdots+
L^{i-1}_{h_{0}(Y_1,\dots,Y_{i-1})}X_1^{\al_1}\cdots X_i^{0}\cdots X_n^{\al_n}.
$$
\begin{theorem}\label{e3} The operators $L_{Y_1},\dots, L_{Y_n}$ commute with each other and
and with the action of $R$ on $V$ by left multiplication. The corresponding ring homomorphism $R[X_1,\dots,X_n]\to\mathrm{End}_R(V)$, satisfying $X_i\to L_{Y_i}$, has kernel $I_f$ and, consequently, $R_f\cong R[L_{Y_1},\dots,L_{Y_n}]$.
\end{theorem}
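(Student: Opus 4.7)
The plan is to realise $V$ as an incarnation of $R_f$ on which the operators $L_{Y_i}$ implement left multiplication by $r_i$, and then to transport the desired properties across that identification. Using the $R$-basis of $R_f$ supplied by Theorem \ref{q1}, I would first define the right-$R$-linear isomorphism $\theta:V\to R_f$ sending $X_1^{\alpha_1}\cdots X_n^{\alpha_n}\mapsto r_1^{\alpha_1}\cdots r_n^{\alpha_n}$; since each $X_i$ is central in $R[X_1,\dots,X_n]$ and each $r_i$ is central in $R_f$ (thanks to the standing hypothesis that $R$ embeds in $R_f$ and the $a_j$ are central), $\theta$ automatically intertwines left multiplication by $R$ as well. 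The heart of the argument is a joint induction on $i\in\{1,\dots,n\}$ proving the intertwining $\theta\circ L_{Y_i}=\ell_{r_i}\circ\theta$, where $\ell_{r_i}$ denotes left multiplication by $r_i$ in $R_f$, while also establishing that $L_{Y_1},\dots,L_{Y_i}$ commute with one another and with left multiplication by $R$ on $V$.

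In the inductive step, basis elements $X^\alpha$ with $\alpha_i<n-i$ cause no difficulty: both sides evaluate to $r_1^{\alpha_1}\cdots r_i^{\alpha_i+1}\cdots r_n^{\alpha_n}$. When $\alpha_i=n-i$, the inductive hypothesis lets me conjugate the ring homomorphism $L^{i-1}:R\langle Y_1,\dots,Y_{i-1}\rangle\to\End_R(V)$ by $\theta$ into the ring homomorphism sending $Y_k\mapsto\ell_{r_k}$. Formula (\ref{magia}) exhibits $h_0,\dots,h_{n-i}$ as polynomials in $X_1,\dots,X_{i-1}$ whose coefficients lie in the center of $R$, so the substitution $Y_k\mapsto L_{Y_k}$ in $h_j(Y_1,\dots,Y_{i-1})$ is free of ordering ambiguity, and $L^{i-1}(h_j(Y_1,\dots,Y_{i-1}))$ is conjugated by $\theta$ to $\ell_{h_j(r_1,\dots,r_{i-1})}$. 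Summing over $j$ and applying the defining identity $X_i^{n-(i-1)}-f_i=\sum_j h_jX_i^j$ together with $f_i(r_1,\dots,r_i)=0$ (the corollary to Theorem \ref{q1}) collapses $\sum_j h_j(r_1,\dots,r_{i-1})\,r_i^j$ to $r_i^{n-(i-1)}$, yielding $\theta L_{Y_i}(X^\alpha)=r_i\cdot\theta(X^\alpha)$. The commutativity of $L_{Y_i}$ with the earlier $L_{Y_j}$ and with left multiplication by $R$ then follows by transport along $\theta$, since on $R_f$ the $r_i$ commute with each other and are central with respect to $R$.

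Once the intertwining is available in full, the remaining claims are formal. The universal property of the polynomial ring furnishes a ring homomorphism $\Phi:R[X_1,\dots,X_n]\to\End_R(V)$ with $X_i\mapsto L_{Y_i}$; conjugating by $\theta$ realises $\Phi$ as the composition $R[X_1,\dots,X_n]\twoheadrightarrow R_f\xrightarrow{\ell}\End_R(R_f)$ of evaluation at $r_1,\dots,r_n$ followed by the left regular representation. The first map has kernel $I_f$ by definition of $R_f$, and $\ell$ is injective because $\ell_x(1)=x$, so $\ker\Phi=I_f$ and $\Phi$ descends to an isomorphism $R_f\cong R[L_{Y_1},\dots,L_{Y_n}]$. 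The main obstacle is the inductive step: ensuring that the symbolic substitution $Y_k\mapsto L_{Y_k}$ inside $h_j(Y_1,\dots,Y_{i-1})\in R\langle Y_1,\dots,Y_{i-1}\rangle$ is compatible, via $\theta$, with the ordinary commutative evaluation $h_j(r_1,\dots,r_{i-1})$ in $R_f$; this compatibility is secured precisely by the centrality of the coefficients of the $h_j$, which is visible directly from (\ref{magia}) under our standing hypothesis.
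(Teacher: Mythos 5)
Your proposal is correct and follows essentially the same route as the paper: both identify $V$ with $R_f$ via the monomial basis, verify (you by an explicit induction, the paper more tersely) that each $L_{Y_i}$ corresponds to $\ell_{r_i}$ under this identification, and then transport commutativity and determine the kernel. Your way of pinning down the kernel --- factoring the homomorphism through the projection onto $R_f$ followed by the injective left regular representation --- is just a slight repackaging of the paper's observation that the operators $L_{Y_1}^{\alpha_1}\cdots L_{Y_n}^{\alpha_n}$, $0\leq\alpha_i\leq n-i$, applied to $1\in V$ are $R$-linearly independent.
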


\begin{proof} Let $\ell:R_f\to\mathrm{End}_R(R_f)$ be the regular representation,
where $R_f$ is viewed as a right $R$-module and $R_f$ acts on
itself by left multiplication. The action of $r_1,\dots,r_n$ on
the basis vectors $r_1^{\al_1}\cdots r_n^{\al_n}$, $0\leq
\al_i\leq n-i$, can be computed using that
$r_i^{n-(i-1)}-f_i(r_1,\dots,r_i)$ is an $R$-linear combination of
$r_1^{\be_1}\cdots r_i^{\be_i}$, with~$\be_i\leq n-i$. The
isomorphism of right $R$-modules $R_f\to V$ given by
$r_1^{\al_1}\cdots r_n^{\al_n} \to X_1^{\al_1}\cdots X_n^{\al_n}$
gives rise to a ring isomorphism $\mathrm{End}_R(R_f)\to
\mathrm{End}_R(V)$, and $L_{Y_1},\dots,L_{Y_n}$ correspond to
$\ell_{r_1},\dots,\ell_{r_n}$ under this isomorphism. In
particular, $L_{Y_1},\dots,L_{Y_n}$ commute with each other and
with the action of $R$ on $V$, which gives rise to the stated ring
homomorphism $R[X_1,\dots,X_n]\to \mathrm{End}_R(V)$.

On the other hand, the factorization $f(Z)=(Z-r_1)\cdots (Z-r_n)$
in $R_f[Z]$ produces the factorization $f(Z)=(Z-\ell_{r_1})\cdots
(Z-\ell_{r_n})$ in $\mathrm{End}_R(R_f)[Z]$, via the regular
representation, which in turn gives, via
$\mathrm{End}_R(R_f)\to \mathrm{End}_R(V)$, the factorization
$f(Z)=(Z-L_{Y_1})\cdots (Z-L_{Y_n})$ in $\mathrm{End}_R(V)[Z]$.
This implies that $I_f$ is included in the kernel of
$R[X_1,\dots,X_n]\to \mathrm{End}_R(V)$. That $I_f$ is actually
the kernel is equivalent to $L_{Y_1}^{\al_1}\cdots
L_{Y_n}^{\al_n}$, $0\leq \al_i\leq n-i$, being linearly
independent over~$R$. This can be seen by applying these operators
to $1\in V$.
\end{proof}

\section{A matrix realization of $R_f$}\label{matai}

Here we obtain a matrix realization of $R_f$ via matrices $A_1,\dots,A_n\in M_{n!}(R)$ corresponding
to the $R$-linear operators $\ell_{r_1}\dots\ell_{r_n}$ of $R_f$ arising from the left regular representation
$\ell:R_f\to\mathrm{End}_R(R_f)$, where $R_f$ is viewed as a right $R$-module.

For notational simplicity it will be convenient to write
$$
f(Z)=Z^n+b_{n-1}Z^{n-1}+\cdots+b_1Z+b_0,
$$
where $b_0,b_1,\dots,b_{n-1}\in R$ are still supposed to be central in $R$. Let
$$
C_f=\left(%
\begin{array}{ccccc}
  0 & 0 & \cdots & 0 & -b_0 \\
  1 & 0 & \cdots & 0 & -b_1 \\
  0 & 1 & \cdots & 0 & -b_2 \\
  \vdots & \vdots & \cdots & \vdots & \vdots \\
  0 & 0 & \cdots & 1 & -b_{n-1} \\
\end{array}%
\right)\in M_n(R)
$$
be the companion matrix to $f$. It will be useful to know the appearance of the elements of $R[C_f]$.

For this purpose, given $g\in R[Z]$ set $\widetilde{g}=g+(f)\in R[Z]/(f)$ and let $[g]=[\widetilde{g}]$
be the column vector in $R^n$ formed by the coordinates
of $\widetilde{g}$ relative to the $R$-basis $\widetilde{1},\widetilde{Z},\dots, \widetilde{Z^{n-1}}$ of $R[Z]/(f)$.

\begin{lemma}\label{ferhec} For $g\in R[Z]$ we have
\begin{equation}\label{otri}
g(C_f)= ([g]\; C_f[g]\dots\; C_f^{n-1}[g])=([g]\; [Zg]\dots\; [Z^{n-1} g]).
\end{equation}
\end{lemma}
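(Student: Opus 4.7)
The plan is to interpret $C_f$ structurally rather than manipulate its entries. First I would identify $C_f$ with the matrix, in the ordered basis $\widetilde{1},\widetilde{Z},\dots,\widetilde{Z^{n-1}}$, of the $R$-linear endomorphism of $R[Z]/(f)$ (viewed as right $R$-module) given by left multiplication by $\widetilde{Z}$. This is visible from the shape of $C_f$: for $1\le k<n$, its $k$-th column is $e_{k+1}$, matching $\widetilde{Z}\cdot\widetilde{Z^{k-1}}=\widetilde{Z^{k}}$; the final column encodes the identity $\widetilde{Z^n}=-b_0-b_1\widetilde{Z}-\cdots-b_{n-1}\widetilde{Z^{n-1}}$ coming from $f(\widetilde{Z})=0$.

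Next I would use that, since the $b_i$ are central, $(f)\subseteq R[Z]$ is a two-sided ideal and $R[Z]/(f)$ is a bona fide ring, and the evaluation map $h\mapsto h(C_f)$ descends to a ring homomorphism $R[Z]/(f)\to M_n(R)$. Combined with the first step, this says that $g(C_f)$ is the matrix, in the same basis, of left multiplication by $\widetilde{g}$ on $R[Z]/(f)$. Its $k$-th column is therefore $g(C_f)\,e_k=[\widetilde{g}\cdot\widetilde{Z^{k-1}}]=[Z^{k-1}g]$, which is exactly the second equality in (\ref{otri}).

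For the first equality I would iterate: the same identification gives $C_f[h]=[Zh]$ for every $h\in R[Z]$, so by induction $C_f^{k-1}[g]=[Z^{k-1}g]$, which matches the $k$-th column already computed and closes the chain of equalities.

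There is no serious obstacle here; the only delicate point is to keep track of the non-commutativity of $R$. The centrality of the $b_i$ is precisely what is needed to make $R[Z]/(f)$ a ring and $g\mapsto g(C_f)$ a well-defined homomorphism, and once those two facts are in hand the argument is a direct transcription of the familiar commutative companion-matrix computation.
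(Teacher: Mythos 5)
Your proof is correct and follows essentially the same route as the paper: both hinge on reading $C_f$, and more generally $g(C_f)$, as the matrix of multiplication by $\widetilde{Z}$ (resp.\ $\widetilde{g}$) on $R[Z]/(f)$ in the basis $\widetilde{1},\widetilde{Z},\dots,\widetilde{Z^{n-1}}$ — the paper extracts the columns via $h(C_f)e_i=C_f^{i-1}h(C_f)e_1$ together with $C_f[\widetilde{p}]=[\widetilde{Zp}]$, while you package the same facts as the statement that evaluation at $C_f$ realizes the left regular representation of $R[Z]/(f)$. Your explicit verification that centrality of the $b_i$ makes $(f)$ two-sided and evaluation at $C_f$ a ring homomorphism (equivalently, that $C_f$ commutes with the scalar matrices $dI_n$) is a point the paper uses implicitly, so it is good that you spelled it out.
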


\begin{proof} Let $h=c_{n-1}Z^{n-1}+\cdots+c_1Z+c_0\in R[Z]$ be the unique polynomial satisfying $g\equiv h\mod (f)$.
It clearly suffices to prove the result for $h$ instead of $g$. Now
$$h(C_f)e_1=[h],$$
so the first columns of the left and right hand sides of (\ref{otri}) are equal. Moreover, for $1<i\leq n$ we have
$$
h(C_f)e_i=h(C_f)C_f^{i-1}e_1=C_f^{i-1}h(C_f)e_1=C_f^{i-1}[h],
$$
which proves both equalities, provided we agree that
$$
C_f^j[p]=[X^j p],\quad p\in R[X], j\geq 0.
$$
This is obvious since the $R$-linear endomorphism of $R[Z]/(f)$ given by multiplication by $\widetilde{X}$ has matrix $C_f$, whence
$$C_f[\widetilde{p}]=[\widetilde{Xp}],\quad p\in R[X].$$
\end{proof}

There are exactly $n$ monic polynomials $g\in R[X]$ of degree $<n$ such that all coefficients of $g(C_f)$ are either 0 or equal to an actual coefficient of $f$, up to a sign. Moreover, for such $g$ the appearance of $g(C_f)$, as given in (\ref{otri}), can be made substantially more explicit.

We proceed to define these polynomials. For this purpose, given $g\in R[Z]$ we define
$$
g^{[0]}(Z)=\frac{g(Z)-g(0)}{Z},\, g^{[1]}(Z)=\frac{g^{[0]}(Z)-g^{[0]}(0)}{Z},\, g^{[2]}(Z)=\frac{g^{[1]}(Z)-g^{[1]}(0)}{Z},\dots.
$$
Thus, if $f(Z)=Z^m+c_{m-1}Z^{m-1}+\cdots+c_1Z+c_0$ then
$$
g^{[0]}(Z)=Z^{m-1}+c_{m-1}Z^{m-2}+\cdots+c_2Z+c_1,\dots,
$$
$$
g^{[m-2]}(Z)=Z+c_{m-1},\, g^{[m-1]}(Z)=1,\, g^{[j]}(Z)=0,\quad j\geq m.
$$
A careful examination of (\ref{otri}) together with the fundamental relation
$$
f(C_f)=0
$$
reveals the exact appearance of $g(C_f)$ for all polynomials $g=f^{[j]}$, $j\geq 0$.
In particular, the coefficients of all such $g(C_f)$ are either 0 or equal to a coefficient of~$f$, up to a sign. We have
\begin{equation}\label{exacto}
f^{[0]}(C_f)=\left(%
\begin{array}{ccccc}
  b_1 & -b_0 & 0 & \cdots & 0 \\
  b_2 & 0 & -b_0 & \ddots & \vdots \\
  \vdots & \vdots & \ddots & \ddots & 0 \\
  b_{n-1} & \vdots & \vdots & \ddots & -b_0 \\
  1 & 0 & 0 & 0 & 0 \\
\end{array}%
\right),
\end{equation}
\begin{equation}\label{exacto2}
f^{[1]}(C_f)=\left(%
\begin{array}{ccccccc}
  b_2 & 0 & -b_0 & 0 & \cdots & 0 & 0\\
  b_3 & b_2 & -b_1 & -b_0 & \ddots & \vdots & \vdots \\
  b_4 & b_3 & 0 & -b_1 & \ddots & \ddots & \vdots \\
  \vdots & \vdots & \vdots & \vdots & \ddots & -b_0  & 0 \\
  b_{n-1} & b_{n-2} & \vdots & \vdots & \vdots & -b_1 & -b_0 \\
  1 & b_{n-1} & \vdots & \vdots & \vdots & 0 &  -b_1 \\
  0 & 1 & 0 & 0 & \cdots & 0 & 0 \\
\end{array}%
\right),\dots,
\end{equation}
\begin{equation}\label{exacto3}
f^{[n-2]}(C_f)=\left(%
\begin{array}{ccccc}
  b_{n-1} & 0 & \cdots & 0 & -b_0 \\
  1 & b_{n-1} & \vdots & \vdots & -b_1 \\
  0 & 1 & \ddots & \vdots & \vdots \\
  \vdots & \vdots & \ddots & b_{n-1} & -b_{n-2} \\
  0 & 0 & \cdots & 1 & 0 \\
\end{array}%
\right),
\end{equation}
\begin{equation}\label{exacto4}
f^{[n-1]}(C_f)=I_n\text{ and }f^{[j]}(C_f)=0_n,\quad j\geq n.
\end{equation}

We next define a total order on the basis $r_1^{\al_1}\cdots r_n^{\al_n}$, $0\leq \al_i\leq n-i$. If $n=1$ there is only one possible order. If $n>1$ let $s_1$ be the sequence
$1,r_1,\dots,r_1^{n-1}$; $s_2$ the sequence $s_1,s_1r_2,\dots,s_1r_2^{n-2}$; 
$s_3$ the sequence $s_2,s_2r_3,\dots,s_2r_3^{n-3}$; and so on. We order $r_1^{\al_1}\cdots r_n^{\al_n}$ according to the sequence  $s_{n-1}$.

Suppose $n>1$ and let $S=R[r_1]$. Then $R_f$ is a free $S$-module with basis $r_2^{\al_1}\cdots r_n^{\al_n}$, $0\leq \al_i\leq n-i$,
with order inherited from the above. In fact, if $$g(Z)=f_2(r_1,Z)=\frac{f(Z)-f(r_1)}{Z-r_1}\in S[Z],$$ then $R_f=S_g$ is the universal
splitting ring for $g$ over $S$. Note that
\begin{equation}\label{gjota}
g(Z)=Z^{n-1}+f^{[n-2]}(r_1)Z^{n-2}+\cdots+f^{[1]}(r_1)Z+f^{[0]}(r_1).
\end{equation}
%We are ready to indicate how $A_1,\dots,A_n$ can be recursively constructed.

\begin{theorem}\label{matex} The matrices $A_1,\dots,A_n\in M_{n!}(S)$ can be recursively constructed as follows.

(1)
$$
A_1=C_f\oplus\cdots\oplus C_f,\quad (n-1)!\text{ summands}.
$$

(2) In particular, if $n=1$ then $A_1=(-b_0)$.

(3) Suppose $n>1$. Let $B_2,\dots,B_n\in M_{(n-1)!}(S)$ be the matrices corresponding
to the $S$-linear operators $\ell_{r_2}\dots\ell_{r_n}$ of $R_f=S_g$ relative to the basis $r_2^{\al_1}\cdots r_n^{\al_n}$, $0\leq \al_i\leq n-i$,
ordered as indicated above. Then for each $2\leq i\leq n$, $A_i$ is obtained from $B_i$ by replacing each entry, necessarily of the form $\pm f^{[j]}(r_1)\in S$, $j\geq 0$, by $\pm f^{[j]}(C_f)\in M_n(R)$, where this matrix is explicitly given in (\ref{exacto})-(\ref{exacto4}).

(4) In particular, every non-zero entry of $A_1,\dots,A_n$ is equal to a coefficient of~$f$, up to a sign.
\end{theorem}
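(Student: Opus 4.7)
The plan is induction on $n$, using the iterated splitting $R_f \cong S_g$ from the paragraph preceding the theorem, where $S = R[r_1]$ and $g \in S[Z]$ is the monic polynomial of degree $n-1$ described in (\ref{gjota}), with central coefficients $f^{[0]}(r_1),\ldots,f^{[n-2]}(r_1),1$. The base case $n=1$ is immediate: $I_f = (X_1 + b_0)$ forces $r_1 = -b_0$, so $A_1 = (-b_0)$, which simultaneously yields (1), (2), and (4). For the inductive step, fix $n>1$ and apply the inductive hypothesis to the splitting ring $S_g$ of degree $n-1$; this furnishes matrices $B_2,\ldots,B_n \in M_{(n-1)!}(S)$ representing $\ell_{r_2},\ldots,\ell_{r_n}$ on the free $S$-module $R_f$ in the prescribed order, every nonzero entry of each $B_i$ being $\pm$ a coefficient of $g$, hence of the form $\pm f^{[j]}(r_1)$. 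To prove (1), observe that the recursive definition of $s_{n-1}$ concatenates $(n-1)!$ consecutive blocks of length $n$, each of the form $(v_k, r_1 v_k, \ldots, r_1^{n-1} v_k)$ where $v_k$ runs through the $S$-basis $r_2^{\alpha_2}\cdots r_n^{\alpha_n}$; on each such block $\ell_{r_1}$ acts just as left multiplication by $\widetilde Z$ on $R[Z]/(f)$ in the basis $\widetilde 1, \widetilde Z, \ldots, \widetilde{Z^{n-1}}$, whose matrix is $C_f$. Hence $A_1 = C_f \oplus \cdots \oplus C_f$ with $(n-1)!$ summands.

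For (3), decompose $R_f = \bigoplus_k S v_k$, and inside each summand $S = R \oplus Rr_1 \oplus \cdots \oplus Rr_1^{n-1}$. If the $(l,k)$-entry of $B_i$ is $\lambda_{lk}(r_1) \in S$, then $r_i v_k = \sum_l \lambda_{lk}(r_1)\, v_l$ by definition of $B_i$, and since $r_i$ commutes with $S$ (the coefficients of $f$ being central in $R$ and the roots commuting with each other), we obtain
$$
r_i \cdot (r_1^{\alpha_1} v_k) \;=\; \sum_l \bigl(r_1^{\alpha_1} \lambda_{lk}(r_1)\bigr) v_l.
$$
Thus the $(l,k)$-block of $A_i$ is precisely the matrix of left multiplication by $\lambda_{lk}(r_1)$ on the free right $R$-module $S \cong R[Z]/(f)$ in the basis $1, r_1, \ldots, r_1^{n-1}$. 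By Lemma~\ref{ferhec} this matrix equals $\lambda_{lk}(C_f)$; in particular, when $\lambda_{lk}(r_1) = \pm f^{[j]}(r_1)$ the block is $\pm f^{[j]}(C_f)$, as displayed in (\ref{exacto})--(\ref{exacto4}). This proves (3), and (4) is then immediate, since $C_f$ and every $f^{[j]}(C_f)$ have all nonzero entries among $\{\pm 1, \pm b_0, \pm b_1, \ldots, \pm b_{n-1}\}$.

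The principal obstacle is a bookkeeping one: we must verify that the ordering induced on the suffixes $v_k = r_2^{\alpha_2}\cdots r_n^{\alpha_n}$ by stripping the $r_1$-layer from $s_{n-1}$ in $R_f$ coincides with the ordering $s^{(g)}_{n-2}$ used to construct $B_2,\ldots,B_n$ over $S$ (with $r'_{i-1} = r_i$ for $2 \le i \le n$). Without this, the block substitution of $\pm f^{[j]}(r_1) \mapsto \pm f^{[j]}(C_f)$ could not be applied position-by-position. But a straightforward unwinding of the recursion $s_j = (s_{j-1}, s_{j-1} r_j, \ldots, s_{j-1} r_j^{n-j})$, comparing it term by term with the analogous recursion defining $s^{(g)}_{n-2}$ for $S_g$, confirms that the two orderings agree. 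This matching, together with the three arguments above, completes the induction.
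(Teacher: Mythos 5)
Your proof is correct and follows essentially the same route as the paper: induction on $n$ through the identification $R_f=S_g$ with $S=R[r_1]$, the inductive hypothesis on $B_2,\dots,B_n$, and the expansion of each $S$-entry $\pm f^{[j]}(r_1)$ into the block $\pm f^{[j]}(C_f)$ via the regular representation of $S\cong R[Z]/(f)$ in the basis $1,r_1,\dots,r_1^{n-1}$. The only difference is one of detail: you spell out the block decomposition $R_f=\bigoplus_k S v_k$, the direct-sum form of $A_1$, and the matching of the ordering on the suffixes $v_k$ with the ordering used for $S_g$, points the paper's proof leaves implicit.
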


\begin{proof} By induction on $n$. The result is clearly true when $n=1$. Suppose that $n>1$ and let $B_2,\dots,B_n\in M_{(n-1)!}(S)$ be the matrices corresponding to the $S$-linear operators $\ell_{r_2}\dots\ell_{r_n}$ of $R_f=S_g$ relative to the basis $r_2^{\al_1}\cdots r_n^{\al_n}$, $0\leq \al_i\leq n-i$, ordered as indicated above. By inductive assumption every non-zero entry of $B_2,\dots,B_n$ is equal to a coefficient of~$g$, up to a sign. By (\ref{gjota}) the coefficients of $g$ are $f^{[j]}(r_1)$, $0\leq j\leq n-1$, and we know that $f^{[j]}=0$ for $j\leq n$. Since the matrix of the $R$-linear operator $\ell_{r_1}$ of $R[r_1]$ relative to the basis $1,r_1,\dots,r_1^{n-1}$ is $C_f$, it follows that
the matrix of $\ell_{f^{[j]}(r_1)}=f^{[j]}(\ell_{r_1})$ is equal to $f^{[j]}(C_f)$, $j\geq 0$. We infer that each $A_i$, $2\leq i\leq n$, is obtained from $B_i$ by replacing each entry $\pm f^{[j]}(r_1)$, $j\geq 0$, by $\pm f^{[j]}(C_f)\in M_n(R)$, where this matrix is explicitly given in (\ref{exacto})-(\ref{exacto4}).
\end{proof}

As an illustration of Theorem \ref{matex}, let us compute the desired matrices when $n=2$ from the case $n=1$, and then
proceed onwards to the case $n=3$ from the case $n=2$. If $n=1$ we have $A_1=(-b_0)$. Moreover, if $n=2$ then
$$
A_1=\left(
      \begin{array}{cc}
        0 & -b_0 \\
        1 & -b_1 \\
      \end{array}
    \right),
$$
with $g(Z)=Z+(r_1+b_1)$ by (\ref{gjota}). Writing this in the form $g(Z)=Z+c_0$ and going back to the case $n=1$ we get $B_2=(-c_0)$,
which results in
$$
A_2=-(C_f+b_1)=\left(
      \begin{array}{cc}
        -b_1 & b_0 \\
        -1 & 0 \\
      \end{array}
    \right).
$$
Furthermore, if $n=3$ then
$$
A_1=C_f\oplus C_f=\left(
    \begin{array}{cccccc}
      0 & 0 & -b_0 & 0 & 0 & 0 \\
      1 & 0 & -b_1 & 0 & 0 & 0 \\
      0 & 1 & -b_2 & 0 & 0 & 0 \\
      0 & 0 & 0 & 0 & 0 & -b_0 \\
      0 & 0 & 0 & 1 & 0 & -b_1 \\
      0 & 0 & 0 & 0 & 1 & -b_2 \\
    \end{array}
  \right),
$$
with $g(Z)=Z^2+(r_1+b_2)Z+(r_1^2+b_2r_1+b_1)$ by (\ref{gjota}). Writing this in the form $g(Z)=Z^2+c_1Z+c_0$ and going back to the case $n=2$ we get
$$
B_2=\left(
      \begin{array}{cc}
        0 & -c_0 \\
        1 & -c_1 \\
      \end{array}
    \right),\; B_3=\left(
      \begin{array}{cc}
        -c_1 & c_0 \\
        -1 & 0 \\
      \end{array}
    \right),
$$
which, thanks to (\ref{exacto})-(\ref{exacto4}), results in
$$
A_2=\left(
    \begin{array}{cccccc}
      0 & 0 & 0 & -b_1 & b_0 & 0 \\
      0 & 0 & 0 & -b_2 & 0 & b_0 \\
      0 & 0 & 0 & -1 & 0 & 0 \\
      1 & 0 & 0 & -b_2 & 0 & b_0 \\
      0 & 1 & 0 & -1 & -b_2 & b_1 \\
      0 & 0 & 1 & 0 & -1 & 0 \\
    \end{array}
  \right),\;
A_3=\left(
    \begin{array}{cccccc}
      -b_2 & 0 & b_0 & b_1 & -b_0 & 0 \\
      -1 & -b_2 & b_1 & b_2 & 0 & -b_0 \\
      0 & -1 & 0 & 1 & 0 & 0 \\
      -1 & 0 & 0 & 0 & 0 & 0 \\
      0 & -1 & 0 & 0 & 0 & 0 \\
      0 & 0 & -1 & 0 & 0 & 0 \\
    \end{array}
  \right).
$$
Here $A_1,A_2,A_3$ commute with each other and with every element of $R$,
$$A_1+A_2+A_3=-b_2,\, A_1A_2+A_1A_3+A_2A_3=b_1,\, A_1A_2A_3=-b_0,
$$
and $1,A_1,A_1^2,A_2,A_1A_2,A_1^2A_2$ are $R$-linearly independent. Thus,
$R_f\cong R[A_1,A_2,A_3]$.

It is clear how to use the case $n=3$ and (\ref{exacto})-(\ref{gjota}) to to obtain the case $n=4$. The process can be continued indefinitely.

\section{Uniqueness of the roots of $f$}\label{aut}

It should be borne in mind that $r_1,\dots,r_n$ need not be the only roots of $f$ in $R_f$.
Indeed, observe that if $t_1,\dots,t_n\in R_f$ the map $p(r_1,\dots,r_n)\mapsto p(t_1,\dots,t_n)$, where $p(X_1,\dots,X_n)\in R[X_1,\dots,X_n]$,
is an automorphism of $R_f$ over $R$ if and only if $t_1,\dots,t_n$ commute with each other and with every element of $R$,
the factorization $f(Z)=(Z-t_1)\cdots (Z-t_n)$ holds in $R_f[Z]$, and $t_1^{\al_1}\cdots t_n^{\al_n}$, $0\leq \al_i\leq n-i$, form an $R$-basis of $R_f$.

Let us view $S_n$ as a subgroup of
$\mathrm{Aut}(R[X_1,\dots,X_n]/R)$. Since $S_n$ preserves~$I_f$, every $\sigma\in S_n$ gives rise to an automorphism
$\widetilde{\s}\in \mathrm{Aut}(R[X_1,\dots,X_n]/I_f)$ that fixes $R$ pointwise, i.e., an automorphism of $R_f$ over $R$.
The map $\s\mapsto \widetilde{\s}$ is a group homomorphism $\Theta:S_n\to\mathrm{Aut}(R_f/R)$. We assume for the remainder of this section that $n>2$. It then follows easily from
Theorem \ref{q1} that $\Theta$ is injective.

The point is that the automorphism group of $R_f$ over $R$ need not reduce to $S_n$. As a matter of fact, let $U$ be the group of central units of $R$. Suppose first that $f(Z)=Z^n$. Then $U$ becomes a subgroup of $\mathrm{Aut}(R_f/R)$ by letting $t_i=ur_i$, $u\in U$,
and $U\cap S_n$ is trivial. More generally, suppose $n=dm$ and that all coefficients $a_i$ of $f$ such that $i\not\equiv 0\mod d$ are equal to 0. Let $U_d$ be the subgroup of $U$ of all $u$ satisfying $u^d=1$ and let $t_i=ur_i$, $u\in U$. Then
$$\s_i(t_1,\dots,t_n)=u^i\s_i(r_1,\dots,r_n)=\s_i(r_1,\dots,r_n),\quad 1\leq i\leq n,$$
so $U_d$ becomes a subgroup of $\mathrm{Aut}(R_f/R)$ and $U_d\cap S_n$
is trivial.

It may be of interest to determine $\mathrm{Aut}(R_f/R)$ and, in particular, when this reduces to $S_n$.
%We wonder if this will be the case when the coefficients of $f$ are algebraically independent over $R$.

%It may also be worthwhile to investigate the uniqueness of the matrices $A_1,\dots,A_n$ constructed in \S\ref{matai},
%in the following sense. Suppose $D_1,\dots,D_n\in M_{n!}(R)$ satisfy the following conditions: $D_1,\dots,D_n$ commute with each other and with every element of $R$; $f(Z)=(Z-D_1)\cdots (Z-D_n)$; $D_1^{\alpha_1}\cdots,D_n^{\alpha_n}$, $0\leq \alpha_i\leq n-i$, are $R$-linearly independent;
%all non-zero entries of $D_1,\dots,D_n$ are equal, up to a sign, to a coefficient of $f$. Then $D_1,\cdots,D_n$ is a permutation of $A_1,\dots,A_n$.
%On the same vein, we wonder if $C_f$ is the only matrix $D$ in
%$M_n(R)$ such that $1,D,\dots,D^{n-1}$ are $R$-linearly
%independent, $f(D)=0$, and every non-zero entry of $D$ is equal,
%up to a sign, to a coefficient of $f$.
% The above fails if D is nilpotent. Even the first question fails due to transposition or conjugation by permutation in $M_{n!}(R)$. If the coefficients of $f$ are alg independent, may be these are the only changes.

%==================================================

\end{document}